\def\classification#1{\def\@class{#1}}
\DeclareMathAlphabet{\mathscr}{OT1}{rsfs}{n}{it}
\newcommand{\R}{{\mathbb R}}
\newcommand{\x}{\boldsymbol x}
\newcommand{\y}{\boldsymbol y}
\newtheorem{theorem}{Theorem}
\newtheorem{proposition}[theorem]{Proposition}
\newtheorem{lemma}[theorem]{Lemma}
\newtheorem{corollary}[theorem]{Corollary}
\theoremstyle{remark}
\newtheorem{remark}[theorem]{Remark}
\title{On the Minkowski distances and products of sum sets}
\author{Oliver Roche-Newton}
\address{Oliver Roche-Newton, Department of Mathematics, University of Bristol,
  Bristol BS8 1TW, United Kingdom}
\email{maorn@bristol.ac.uk}
\author{Misha Rudnev}
\address{Misha Rudnev, Department of Mathematics, University of Bristol,
  Bristol BS8 1TW, United Kingdom}
\email{m.rudnev@bristol.ac.uk}
\subjclass[2000]{68R05,11B75}
\begin{document}

\begin{abstract}
Given two points $p,q$ in the real plane, the signed area of the rectangle with the diagonal $[pq]$ equals the square of the Minkowski distance between the points $p,q$.  We prove that $N>1$ points in
the Minkowski plane $\R^{1,1}$ generate $\Omega(\frac{N}{\log{N}})$ distinct distances, or all the distances are zero. The proof follows the lines of the Elekes/Sharir/Guth/Katz approach to the Erd\H
os distance problem, analysing the $3D$ incidence problem, arising by considering the action of the Minkowski isometry group $ISO^*(1,1)$.

The signature of the metric creates an obstacle  to applying the Guth/Katz incidence theorem to the $3D$ problem at hand, since one may encounter a high count of congruent line intervals, lying on null
lines, or ``light cones'', all these intervals having zero Minkowski length. In terms of the Guth/Katz theorem, its condition of the non-existence of ``rich planes'' generally gets violated. It turns out,
however, that one can efficiently identify and discount incidences, corresponding to null intervals and devise a counting stratagem, where the rich planes condition happens to be just ample enough for
the stratagem to succeed.

As a corollary we establish the following near-optimal sum-product type estimate for finite sets $A,B\subset \R$, with more than one element:
$$|(A\pm{B})\cdot{(A\pm{B})}|\gg{\frac{|A||B|}{\log{|A|}+\log{|B|}}}.$$

\end{abstract}

\maketitle

\begin{section}{Introduction}
The main result of this note resolves, up to logarithmic terms, a variant of the Erd\H os distance problem in the Minkowski plane $\R^{1,1}$, with the metric tensor
${\displaystyle\left(\begin{array}{rrr} 1& 0\\0&-1\end{array}\right).}$ This problem naturally arises if one has in mind a sum-product type estimate, of proving a lower bound on the size of the set
$(A+{B})\cdot(A+{B})$, where $A,B$ are finite sets of reals, with more than one element. As usual, $|\cdot|$ denotes the number of elements in a finite non-empty set, with more than one element, and
the sum-set $A+B$ is defined as
$$
A+B=\{a+b:\,a\in A, b\in B\}.
$$
The product, ratio, and difference sets, denoted respectively  as $A\cdot B$, $A:B$ (not dividing by zero if it is in $B$), $A-B$, are defined similarly. Throughout the paper, the standard notation
$\ll,\gg$ and, respectively, $O,\Omega$ is applied to positive quantities in the usual way. Say, $X\gg Y$ or $X=\Omega(Y)$ means that $X\geq cY$, for some absolute constant $c>0$.

We establish the following estimate:
\begin{equation}\label{mrs}|(A\pm{B})\cdot{(A\pm{B})}|\gg{\frac{|A||B|}{\log{|A|}+\log{|B|}}}.
\end{equation}

Note that the estimate \eqref{mrs} is sharp, up to the logarithmic terms. The example of the integer interval $B=A=[1,2,\ldots,|A|]$ shows that the right-hand side must indeed be $o(|A||B|)$.

On the other hand, the bound
\begin{equation}
|(A-A):(A-A)|\geq{|A|^2}, \label{ungarsp}
\end{equation}
has been known since the 1970s. The set in the left-hand side of (\ref{ungarsp}) is the set of non-vertical directions of the set of lines, incident to at least two points of the point set $A\times
A\subset \R^2$. The result of Ungar, \cite{U},  establishes, with the best possible constant, that ``$2N$ non-collinear points in the plane determine at least $2N$ distinct directions''. The fact
itself that the right-hand side of (\ref{ungarsp}) is $\Omega(|A|^2)$ had been known prior to \cite{U}: see the references, contained therein.

The proof of the lower bound for the number of distinct directions, determined by a non-collinear plane set of points seems to be using the topological properties of the real plane in a much more basic
way\footnote{Ungar considers orthogonal projections of the point set on a ``reference line'', which is being rotated. Upon every instance of the latter line finding itself perpendicular to some line,
determined by a pair of points in the set, the ordering in the set of the points' projections on the reference line changes. The problem of counting the directions, determined by $P$, gets thus
reformulated as a purely combinatorial one of counting moves, which reverse a permutation of $\{1,\ldots,N\}.$}  than the Szemer\'edi-Trotter type incidence theorems, which play the key role in the
proof herein, and we view the presence of the logarithmic term in (\ref{mrs}) as a testimony that this estimate is somewhat more subtle than (\ref{ungarsp}).

The estimate (\ref{mrs}) represents a weaker version of the well known and wide open Erd\H os-Szemer\'edi conjecture, \cite{ES},  claiming that for any $\varepsilon>0$, as $|A|\rightarrow\infty$,
\begin{equation}\label{ESC}
|A+A|+|A\cdot A|\geq |A|^{2-\varepsilon}.
\end{equation}
Here is a heuristic argument why the Erd\H os-Szemer\'edi conjecture is supposedly much harder than the question we address in this paper. Let us call a real function $f(x_1,\ldots,x_k)$ of several
variables an {\em expander} if with the values of each $x_i$ running over a finite set $A$, the range of $f$, denoted as $f(A,\ldots,A)$, has at least some $c_\epsilon |A|^{2-\varepsilon}$ elements, as
$|A|\rightarrow\infty$.  The smaller the number of variables $k$, the harder it is to establish that a particular $f(x_1,\ldots,x_k)$ is an expander. The first formula in (\ref{tim}) below gives a three-variable expander, see also the discussion following it.

There are no two-variable expanders known. For the best known expansion properties of a function $x_1(x_2+1)$ of two variables, see \cite{JRN} and the references contained therein. Also  $g$ is a strictly convex function of one variable, then one could conjecture, similar to (\ref{ESC}) that either $x_1+x_2$ or $f(x_1,x_2)=g(x_1)+g(x_2)$, should be an expander. The best known results about sums of convex sets are due to Schoen and Shkredov: see \cite{SS}, as well as \cite{LRN}.

The Erd\H os-Szemer\'edi conjecture claims that one of the two functions of two variables, $x_1+x_2$  or $x_1x_2$, evaluated on the same set, is an expander. The estimate (\ref{mrs}) states that a
single four-variable function $(x_1\pm x_2)(x_3\pm x_4)$ is an expander.

If $f$ has four variables, one can view $(x_1,x_2)$ and $(x_3, x_4)$ as two points in the plane, both lying in the point set $P=A\times A$. This enables one to deal with the problem, using geometric
combinatorics. This is exactly the case with the function $f(x_1,\ldots,x_4)=(x_1-x_3)(x_2- x_4)$, and in order to prove that this function is an expander, we will take advantage of the fact that it
equals the square of the Minkowski distance between the points $(x_1,x_2)$ and $(x_3, x_4)$ in $\R^{1,1}$. Hence, one is naturally led to a variant of the Erd\H os distance problem in $\R^{1,1}$.
Before turning to it, we remark that the Minkowski metric was also used by Hart et al., \cite{HIS}, as a vehicle to prove new at the time sum-product estimates over the finite field $\mathbb F_q$.

\medskip
The Erd\H os distance problem was originally formulated in \cite{E} -- for the case of the Euclidean distance -- in 1946. Applied verbatim to the Minkowski plane $\R^{1,1}$, it would claim that a
finite point set $P\subset \R^{1,1}$ determines, for any $\epsilon>0$, the set of squares of the Minkowski distances
\begin{equation}\label{MD}
\Delta_{1,1}(P):=\{(p_1-q_1)^2 - (p_2-q_2)^2: \;p,q\in P\},
\end{equation}
of cardinality at least $c_\epsilon |P|^{1-\epsilon}$.

In 2010, Guth and Katz, \cite{GK}, settled the Erd\H os distance problem (up to logarithmic terms) in the foundational case of the Euclidean plane. Stronger versions of the conjecture in $\R^2$ (e.g.
the pinned one -- proving the lower bound for the size of the set of distances from a ``typical'' point of $P$) remain open, as well as the full conjecture in higher dimension.

In the Minkowski plane, there is an immediate counterexample to the claim that the number of distinct distances is always ``almost'' $|P|.$ Let $P$ be supported on a ``light cone'', that is a single
line, whose slope equals $\pm1$. Then all the pair-wise Minkowski distances are zero. However, it turns out that this is the only counterexample. Our main result is the following theorem.

\begin{theorem}\label{minerd} Let $P\subset \R^{1,1}$ be such that $\Delta_{1,1}(P)\neq\{0\}$. Then, for some absolute $c$,
$$
|\Delta_{1,1}(P)| \geq c\frac{|P|}{\log|P|}.
$$
\end{theorem}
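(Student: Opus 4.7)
I would follow the Elekes--Sharir--Guth--Katz (ESGK) framework. Write $N = |P|$, $D = |\Delta_{1,1}(P)|$, let $Z$ be the number of ordered pairs $(p,q)\in P^2$ with $\|p-q\|_{1,1}^2 = 0$, and let $Q$ count ordered quadruples $(p,q,p',q')\in P^4$ satisfying $p\neq q$, $p'\neq q'$, and $\|p-q\|_{1,1}^2 = \|p'-q'\|_{1,1}^2 \neq 0$. Grouping by the common non-null distance value and applying Cauchy--Schwarz to the multiplicities yields $Q \geq (N(N-1)-Z)^2/D$. The strategy is to prove $Q \ll N^3\log N$ and to control $Z$ so that $N(N-1) - Z \gg N^2$; together these give $D \gg N/\log N$.

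For the upper bound on $Q$, I would adapt the ESGK reduction to a line-incidence problem in $\R^3$. For each ordered pair $(p,p')\in P^2$, the set of isometries in $ISO^*(1,1)$ mapping $p$ to $p'$ is a $1$-parameter subfamily which, in suitable coordinates on the $3$-dimensional group $ISO^*(1,1)$, should become a straight line $\ell_{p,p'}$ in $\R^3$. Two pairs have equal non-null Minkowski distance iff the corresponding lines meet, so $Q$ is comparable to the number of intersecting pairs in the family $\mathcal{L} = \{\ell_{p,p'}\}$ of at most $N^2$ lines. Summing the Guth--Katz bound on $k$-rich points dyadically over $2\leq k\leq |\mathcal{L}|^{1/2}$ yields $Q \ll |\mathcal{L}|^{3/2}\log |\mathcal{L}| \ll N^3 \log N$, \emph{provided} no plane in $\R^3$ contains too many lines of $\mathcal{L}$ (and similarly for reguli).

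The main obstacle, flagged in the introduction, is verifying this rich-plane hypothesis. The Minkowski signature creates degeneracies: if the displacement $p-p'$ is null, then the subgroup of isometries moving $p$ to $p'$ contains a non-compact factor of null translations, and many lines $\ell_{p,p'}$ accumulate onto common planes associated with fixed light-cone directions. The plan is to identify these rich planes explicitly---they should correspond precisely to pairs of null lines in $\R^{1,1}$---and excise the offending null-displacement lines from $\mathcal{L}$. Since null pairs contribute only to the distance $0$, they are absent from $Q$ to begin with; one hopes that removing them from $\mathcal{L}$ leaves a subfamily satisfying the Guth--Katz hypothesis just ample enough to deliver the $N^3\log N$ bound.

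To control $Z$, apply a dichotomy based on $\Delta_{1,1}(P) \neq \{0\}$. If no null line carries more than $N/2$ points of $P$, then a direct count over the (at most two) null directions gives $Z \leq N^2/2$, and the theorem follows from the preceding inequalities. Otherwise, some null line $L$ contains more than $N/2$ points of $P$, and by hypothesis there exists $p_0 \in P \setminus L$; the Minkowski distance $\|p_0 - p\|_{1,1}^2$ is an affine function of the position of $p$ along $L$, so it takes $|L\cap P| \gg N$ distinct values on $L\cap P$, and $D \gg N$ follows trivially. The step I expect to be hardest is making the rich-plane excision quantitative---first, verifying that the ESGK curves really do parameterize as lines in the Minkowski setting, and second, showing that after removing null-displacement lines the residual rich-plane count is small enough that the Guth--Katz hypothesis holds.
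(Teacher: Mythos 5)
Your overall architecture matches the paper's: the Cauchy--Schwarz reduction to counting congruent non-null pairs of intervals, the dichotomy on whether a null line carries a positive proportion of $P$ (in which case a point off that line already gives $\gg N$ distinct distances), the parameterisation of $\{\phi\in ISO^*(1,1):\phi(p)=p'\}$ as a line in $\R^3$, and the recognition that the Guth--Katz rich-plane hypothesis is the sticking point. The verification that these sets really are lines goes through exactly as you anticipate (in coordinates $(x,yz,z)$ on the group), and conditions (i) and (iii) of Guth--Katz hold as in the Euclidean case.

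The genuine gap is in your proposed remedy for the rich planes: you plan to \emph{excise the null-displacement lines} $\ell_{p,p'}$ (those with $\|p-p'\|_{1,1}^2=0$) from $\mathcal{L}$. This misidentifies which lines populate a rich plane. A plane of the offending type has equation $x+p_1z=s_1$ and contains \emph{every} line $\ell_{p,s}$ for which $p$ has abscissa $p_1$ and $s$ has abscissa $s_1$ (in the null coordinates where the metric is $dx_1\,dx_2$); whether $p-s$ itself is null is irrelevant. What is null is the displacement $p-q$ (and $s-t$) between the \emph{two} pairs indexing two coplanar lines $\ell_{p,s}$ and $\ell_{q,t}$, since coplanarity forces $p_1=q_1$ and $s_1=t_1$. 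Consequently there is no subset of lines one can delete to restore the hypothesis: every line of $\mathcal{L}$ lies in such a plane, and one cannot tell from a single line whether its incidences should be counted. The paper's resolution is point-based rather than line-based: (1) observe that intersections of two lines lying in a common rich plane correspond to zero Minkowski distance and hence never contribute to $Q$, so they may be discounted rather than removed; (2) after a preliminary reduction ensuring each line lies in exactly one rich plane, split the $k$-rich points $S_k$ into those where at least half the incident lines are $k$-poor (Guth--Katz applies directly, giving $|S_k|k^2\ll N^3$) and those where at least half are $k$-rich; for the latter, work plane by plane over the at most $N/k$ $k$-rich planes, bounding only the intersections involving lines transverse to the plane, which yields $O(kN^2)$ per plane and $O(N^3)$ in total. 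You would need to replace your excision step with some such counting stratagem; as stated, your plan stalls exactly at the point you flag as hardest.
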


Let us further change the old coordinates $(x_1,x_2)$ in $\R^{1,1}$ to the new coordinates $(x_1',x_2')$ as follows:
\begin{equation}
x_1'= x_1 - x_2, \qquad x_2'= x_1+x_2, \label{coordchg}\end{equation} so that the metric becomes $d x_1^2- dx_2^2 = dx_1' dx_2'$. We further drop the prime indices for the new coordinates. These are
the new coordinates in $\R^{1,1}$ that we further use.

For two points $p=(p_1,p_2),\;q=(q_1,q_2)$ in $P$, the square of the Minkowski distance between them equals the {\em rectangular area}, $R(p,q)$, calculated as follows:
\begin{equation}\label{recta}R(p,q)=(q_1-p_1)(q_2-p_2).\end{equation}
Given two point sets $P,Q$ we define the set of rectangular areas as
$$R(P,Q):=\{R(p,q):p\in{P},\,q\in{Q}\}$$
and write $R(P)$ instead of $R(P,P)$. Note that $R(P)$ is just a different notation for the set of squares of Minkowski distances $\Delta_{1,1}(P)$, defined by (\ref{MD}).

\addtocounter{theorem}{-1}

\renewcommand{\thetheorem}{\arabic{theorem}'}

We restate Theorem \ref{minerd}  in terms of the rectangular areas, from now on let $N=|P|$. \begin{theorem} \label{theorem:main} Let $P$ be a set of $N$ points in the plane, such that not all points of $P$
lie on a single horizontal or vertical line. Then
\begin{equation}|R(P)|\gg{\frac{N}{\log{N}}}.\label{set}\end{equation}
\end{theorem}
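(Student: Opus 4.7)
The plan is to follow the Elekes--Sharir--Guth--Katz approach and reduce counting distinct rectangular areas to a line-line incidence problem in $\R^3$. In the new coordinates, the connected component of the Minkowski isometry group acts by $(x_1,x_2)\mapsto(\lambda x_1+a,\,\lambda^{-1}x_2+b)$ and so is $3$-dimensional with parameters $(a,b,\lambda)$. For each ordered pair $(p,q)\in P\times P$ with $p\neq q$, the subgroup of isometries sending $p$ to $q$ is a one-parameter family; introducing the coordinate $c=\lambda b$ turns it into an honest line $\ell(p,q)\subset\R^3$ through $(q_1,-p_2,0)$ with direction $(-p_1,q_2,1)$. This produces a collection $\mathcal{L}$ of at most $N(N-1)$ lines.

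By the standard ESGK dictionary, $\ell(p_1,q_1)$ and $\ell(p_2,q_2)$ meet iff some isometry sends $(p_1,p_2)$ to $(q_1,q_2)$, iff $R(p_1,p_2)=R(q_1,q_2)$. Writing $r(d)=\#\{(p,q)\in P^2:R(p,q)=d\}$, Cauchy--Schwarz gives
$$
|R(P)\setminus\{0\}|\;\geq\;\frac{M^2}{Q},\qquad M=\sum_{d\neq 0}r(d),\quad Q=\sum_{d\neq 0}r(d)^2,
$$
and $Q$ is dominated by the number of intersecting pairs among the lines of $\mathcal{L}$ arising from non-null pairs. The Guth--Katz theorem bounds that count by $O(L^{3/2}\log L)$ for $L$ lines in $\R^3$ provided no plane or regulus contains more than $\sqrt{L}$ of them; taking $L\sim N^2$ yields $Q=O(N^3\log N)$, and hence $|R(P)|\gg N/\log N$ whenever $M=\Omega(N^2)$.

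The main obstacle is verifying the rich-plane hypothesis. A short computation of when $\ell(p,q)$ lies in a plane $\alpha a+\beta c+\gamma\lambda=\delta$ yields the two linear conditions $\alpha q_1-\beta p_2=\delta$ and $\beta q_2-\alpha p_1=-\gamma$; inspecting them case by case shows that rich planes correspond either to pairs $(p,q)$ with both $p_2$ and $q_2$ prescribed (so $p,q$ lie on a fixed pair of horizontal lines of $P$), to the analogous vertical configuration, or to a mixed condition that uniquely determines $q$ from $p$ and therefore contributes at most $N$ lines per plane, already at the $\sqrt{L}$ threshold. The genuinely dangerous sub-case is the first two with the two horizontal (resp.\ vertical) lines coinciding, that is, pairs on a single null line of $P$ -- the null intervals flagged in the abstract.

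The heart of the argument will therefore be to isolate and discount the null-pair lines from $\mathcal{L}$ and apply Guth--Katz to the residual collection, splitting by the maximum occupancy $k_{\max}$ of a horizontal or vertical line by points of $P$. If $k_{\max}=\Omega(N)$, fix any $p_0\in P$ whose relevant coordinate differs from that of the heavy line -- such a point exists by the hypothesis $\Delta_{1,1}(P)\neq\{0\}$ -- and the map $q\mapsto R(p_0,q)$ already produces $\Omega(N)$ distinct values, beating the required bound outright. If $k_{\max}=O(\sqrt N)$, then each rich plane contains at most $k_{\max}^2=O(N)\leq\sqrt{L}$ lines once the null-pair ones are removed, so Guth--Katz applies cleanly. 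The delicate intermediate regime, $\sqrt{N}\ll k_{\max}\ll N$, is where the real work lies: the discounting of null-pair lines must be arranged so as to push the rich-plane density just below the $\sqrt L$ threshold while the removed pairs contribute negligibly to $M$ and to $Q$. Orchestrating this balance -- ``just ample enough'' in the phrasing of the abstract -- is the technical crux of the proof.
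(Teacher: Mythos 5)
Your setup---the parameterisation of the isometries sending $p$ to $q$ as lines in $\R^3$, the Cauchy--Schwarz reduction to counting congruent non-null pairs, and the identification of the dangerous planes as those indexed by a pair of abscissae or a pair of ordinates---matches the paper. The reduction of the case $k_{\max}=\Omega(N)$ via a pinned point, and the observation that $k_{\max}=O(\sqrt N)$ makes Guth--Katz apply directly, also both appear in the paper (the latter as ``Case 2'', after passing to a positive-proportion subset whose coordinates are all poor). But the proposal stops exactly where the proof has to start: the intermediate regime is declared to be ``the technical crux'' and left open, so as written this is a plan with a genuine gap, not a proof.

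Moreover, the strategy you sketch for closing that gap is misdirected. You propose to \emph{remove} the null-pair lines so as to ``push the rich-plane density just below the $\sqrt L$ threshold.'' This cannot work, for a structural reason: the lines filling a rich plane $x+p_1z=s_1$ are the lines $\ell(p,s)$ with $p$ on the vertical line $x_1=p_1$ and $s$ on the (generally different) vertical line $x_1=s_1$, and these are typically \emph{not} null pairs---$R(p,s)\neq 0$ whenever the two abscissae differ. What is null is not the pair indexing a line in the plane, but the pairs $(p,q)$ and $(s,t)$ arising from an \emph{intersection of two lines inside} such a plane: if $\ell(p,s)$ and $\ell(q,t)$ both lie in $x+p_1z=s_1$ then $p_1=q_1$ and $s_1=t_1$, so $R(p,q)=R(s,t)=0$ and the intersection simply does not contribute to the quadruple count. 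The paper's resolution keeps all the lines and instead (a) reduces to the case where every line lies in exactly one potentially rich plane, (b) for each dyadic $k$ calls a plane $k$-rich if it carries more than $Nk$ lines---exploiting the $O(Nk)$ allowance in condition (ii) of the Guth--Katz theorem, which is far more generous than your fixed $\sqrt L$ threshold---and splits the $k$-rich points by whether at least half their incident lines are $k$-poor or $k$-rich; Guth--Katz handles the former, and the latter are handled by a direct count: there are at most $N/k$ $k$-rich planes, in-plane intersections are discounted as above, and transverse intersections contribute $O(kN^2)$ per plane, giving $O(N^3)$ in total. Without this (or an equivalent) mechanism, the balance you describe as ``just ample enough'' is not achieved.
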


\renewcommand{\thetheorem}{\arabic{theorem}}

The sum-product inequality (\ref{mrs}) becomes a direct corollary of Theorem \ref{theorem:main}, with $P=A\times B$. The theorem strictly speaking, entails the estimate (\ref{mrs}) immediately only in
the case of the $-$ signs. However, it generalises trivially to yield the same, up to a constant, bound for the set $R(P,-P),$ which enables one to use any combination of the $-$ and $+$ signs in the
estimate (\ref{mrs}).

The estimate \eqref{set} is sharp up to the logarithmic factor. As we've mentioned, if $P=[1,\ldots,|A|]^2$, then $\Delta_{1,1}(P)=R(P)$ is contained in the set of all pair-wise products of integers
between $-|A|+1$ and $|A|-1$, , whose cardinality is $o(|A|^2),$  being off from $|A|^2$ by a logarithmic factor. See \cite{F} for the precise asymptotics.

\medskip

The Guth/Katz proof of the Erd\H os distance conjecture is based on at least two fundamental new ideas. One, due in particular to Elekes and Sharir, \cite{ES}, is to interpret the plane problem about the distances
as a three-dimensional problem about incidences between straight lines in the Euclidean plane isometry group $SE_2(\R)$. The other is to introduce the polynomial method, fetching in particular the
polynomial version of the Ham Sandwich theorem of Stone and Tukey, \cite{StT}, to prove a generally sharp upper bound for the number of incidences in the latter incidence problem. Remarkably, the
latter bound applies to the corresponding incidence problem in $\R^3$ in the best possible way, without the further need to appeal to the underlying problem in $\R^2$.

More precisely, the estimate $\Omega(\frac{N}{\log{N}})$ for the number of distinct Euclidean distances, generated by a set of $N>1$ points in $\R^2$, arises -- after a dyadic summation and an
application of the Cauchy-Schwarz inequality -- from the following Szemer\'edi-Trotter type incidence theorem\footnote{We have amalgamated  the statements of Theorems 2.10, 2.11, and 4.5 in \cite{GK}
into one that suits our purposes. In particular, the condition (iii) of Theorem \ref{theorem:GKmain} is necessary only in the case $k=2$. The condition (i) enables one to subsume into (\ref{GKE}) the
``trivial'' estimate $|S_k|\gg\frac{N^2}{k}$, as $N^2$ lines can always be arranged as $N^2/k$ non-intersecting bushes of roughly $k$ lines in a bush. The condition (ii) is not mentioned explicitly in
Theorem 2.11 of \cite{GK}, being too ample for its purposes. However, it is spelled out in Theorem 4.5 therein.}.

\begin{theorem}[Guth-Katz]\label{theorem:GKmain}  Let $k\ge 2$. Consider a set of $N^2$ lines in $\mathbb{R}^3$ such that

(i) no more than $O(N)$ lines are concurrent,

(ii) no more than $O(N k)$ lines lie in a single plane,

(iii) no more than $O(N)$ lines lie in a single non-plane doubly ruled surface.

Let $S_k$ be the set of points in $\R^3$, incident to at least $k$ and at most $2k$ lines. Then
\begin{equation}\label{GKE} |S_k|\ll \frac{N^3}{k^2}.\end{equation}
\end{theorem}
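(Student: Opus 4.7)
The plan is to use the polynomial partitioning strategy of Guth and Katz. I would fix a parameter $D$ (ultimately $D \sim N/k$) and invoke the polynomial ham sandwich theorem of Stone-Tukey to produce a nonzero $p \in \R[x,y,z]$ of degree $\le D$ whose zero set $Z := Z(p)$ partitions $\R^3 \setminus Z$ into $O(D^3)$ open connected cells, each containing $O(|S_k|/D^3)$ points of $S_k$. Then split $S_k = S_k^{\mathrm{cell}} \sqcup S_k^{\mathrm{surf}}$ into cellular rich points and rich points lying on $Z$.

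For $S_k^{\mathrm{cell}}$, each of the $N^2$ lines is either fully contained in $Z$ or, by B\'ezout applied to $p$ along the line, meets the interiors of at most $D+1$ cells; hence $\sum_i m_i \ll N^2 D$, where $m_i$ is the number of lines meeting cell $C_i$. Within a single cell, a generic projection to a plane and the Szemer\'edi-Trotter incidence bound give a local estimate $s_i \ll m_i^2/k^3 + m_i/k$ on the count of $k$-rich points there (since every rich point accounts for $\binom{k}{2}$ pairs of crossing lines). Summing, with the worst case being the balanced one $m_i \sim N^2/D^2$, yields $|S_k^{\mathrm{cell}}| \ll N^4/(Dk^3) + N^2 D/k$, and choosing $D = N/k$ equalises the two terms at the desired $N^3/k^2$.

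For $S_k^{\mathrm{surf}}$, any line not contained in $Z$ meets $Z$ in at most $D$ points, contributing at most $N^2 D = N^3/k$ point-line incidences on $Z$, hence at most $N^3/k^2$ rich points that way. The genuine difficulty is to bound rich points on $Z$ whose $k$ incidences come entirely from lines lying inside $Z$. For this, decompose $Z$ into irreducible components $Z = Z_1 \cup \dotsb \cup Z_r$ and invoke the Cayley-Salmon theorem via the flecnode polynomial: any irreducible component $Z_j$ carrying more than $11(\deg Z_j)^2$ lines must be a ruled surface. Planar components are handled by hypothesis (ii), which caps their line count at $O(Nk)$; non-planar doubly ruled components are handled by hypothesis (iii), which caps their line count at $O(N)$; singly ruled components are controlled by exploiting that through each smooth point of such a surface there passes essentially one surface-line, so a $k$-rich point on $Z_j$ must draw most of its incidences from lines that are \emph{not} contained in $Z_j$, reducing back to the previously handled transversal case, with hypothesis (i) preventing any residual concentration.

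The main obstacle is precisely this surface analysis: one needs the Cayley-Salmon classification of line-rich irreducible surfaces as ruled, together with a careful bookkeeping that feeds the three hypotheses (i)--(iii) into bounds on incidences supported inside $Z$. This algebraic-geometric step is the technical heart of the Guth-Katz argument, and it is where the signed-area / light-cone pathology in the ambient Minkowski incidence problem would fail a naive variant of the bound. Once both the cellular and surface contributions are shown to be $O(N^3/k^2)$, combining them yields the theorem.
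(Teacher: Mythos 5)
The paper does not actually prove this statement: it is imported from Guth and Katz \cite{GK}, amalgamating their Theorems 2.10, 2.11 and 4.5 (as the footnote attached to the statement records), so there is no in-paper proof to compare yours against. Your outline reproduces the genuine architecture of the proof in \cite{GK} --- polynomial partitioning at degree $D\sim N/k$, Szemer\'edi--Trotter inside the cells, and the Cayley--Salmon/flecnode classification of line-rich components of the zero set, with hypotheses (i)--(iii) entering essentially where you place them --- so as a roadmap it is faithful.

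Two substantive caveats. First, the cellular step as written has a gap: from $\sum_i m_i \ll N^2D$ alone you cannot conclude $\sum_i m_i^2/k^3 \ll N^4/(Dk^3)$, because $\sum_i m_i^2$ under that constraint is maximized by \emph{concentrating} the line--cell incidences, not by balancing them; ``the worst case is the balanced one'' is false as stated. The argument in \cite{GK} instead exploits the equidistribution of the \emph{points} (each cell holds $s_i\ll |S_k|/D^3$ points, which the polynomial ham sandwich theorem does guarantee), combines it with the cellwise bound $s_i\ll m_i^2/k^3$ via the geometric mean $s_i\ll (|S_k|/D^3)^{1/2}m_i k^{-3/2}$, which is linear in $m_i$ and hence summable, and closes with a bootstrapping inequality for $|S_k|$. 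Second, in \cite{GK} the cases $k\ge 3$ and $k=2$ are separate theorems proved by different arguments: the regulus hypothesis (iii) is needed only for $k=2$ (as the paper's footnote also notes), where the partitioning polynomial is taken of degree $O(N)$ so as to contain all the lines and the ruled-surface analysis carries the entire weight; your uniform treatment with $D=N/k$ does not specialize correctly there. Likewise, your assertion that a rich point of a singly ruled component must draw most of its incidences from transversal lines conceals the exceptional-point/exceptional-line lemma of \cite{GK} (at most one of each per irreducible singly ruled surface), with hypothesis (i) absorbing the exceptional points. None of this affects the present paper, which uses the theorem strictly as a black box.
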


For comparison, although it not used directly in this paper, we give an analogous statement of the Szemer\'edi-Trotter theorem, \cite{ST}.
\begin{theorem}[Szemer\'edi-Trotter]\label{theorem:ST} Let $k\ge 2$. Consider a set of $N^2$ lines in $\mathbb{R}^2$ , such that no more than $O(N)$ lines are concurrent. Let $S_k$ be the set of points
in $\R^2$, incident to at least $k$ lines. Then
$$|S_k|\ll \frac{N^4}{k^3}.
$$
\end{theorem}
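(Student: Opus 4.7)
The plan is to prove this via the Székely crossing-number argument and then use the concurrency hypothesis to handle the regime where the standard bound would carry a dominant lower-order term. Write $L$ for the given line set, $|L|=N^2$, and set $P=S_k$ with $V:=|P|$. For each line $\ell\in L$ containing $m_\ell\geq 2$ points of $P$, join consecutive points of $P$ along $\ell$ by an edge; this produces a topological graph $G$ drawn in the plane with $V$ vertices and
$$
E\ \geq\ \sum_{\ell:\,m_\ell\geq 1}(m_\ell-1)\ \geq\ I(P,L)-|L|\ \geq\ kV - N^2
$$
edges, where the middle inequality uses that every point of $P$ lies on at least $k$ lines.

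Any two lines of $L$ meet in at most one point, so all crossings of $G$ occur at the $\ll N^4$ pairwise intersections, giving $\mathrm{cr}(G)\ll N^4$. If additionally $E\geq 4V$, the crossing lemma supplies $\mathrm{cr}(G)\gg E^3/V^2$. When $kV\geq 2N^2$ one has $E\geq kV/2$, and combining these two inequalities produces
$$
\frac{(kV)^3}{V^2}\ \ll\ N^4,\qquad\text{so}\qquad V\ \ll\ \frac{N^4}{k^3}.
$$

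It remains to tidy up the boundary cases. If $kV<2N^2$ then $V<2N^2/k$, which is already $\leq 2N^4/k^3$ whenever $k\leq N$. If instead $E<4V$, combined with the lower bound $E\geq kV/2-N^2$ (valid for $k\geq 8$) this forces $V\ll N^2/k$, again $\ll N^4/k^3$ for $k\leq N$. The remaining regime $k\gg N$ is where the hypothesis of the theorem is indispensable: since no more than $O(N)$ lines are concurrent, there is an absolute $c$ with $S_k=\emptyset$ whenever $k>cN$, and the bound holds vacuously. The only delicate item is the interplay between the crossing-lemma hypothesis $E\geq 4V$ and the $-N^2$ error in the edge count; once this bookkeeping is arranged, the concurrency hypothesis absorbs the standard $|L|/k$ secondary term that usually appears in Szemerédi--Trotter, matching the form of the theorem as stated.
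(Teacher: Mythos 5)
The paper offers no proof of this statement: Theorem \ref{theorem:ST} is quoted from Szemer\'edi--Trotter \cite{ST} purely ``for comparison'' with the Guth--Katz theorem and is never used in the argument, so there is no in-paper proof to measure your attempt against. Judged on its own, your proposal is the standard Sz\'ekely crossing-number derivation of the rich-points form of Szemer\'edi--Trotter, and it is essentially correct. The one place where the stated concurrency hypothesis genuinely matters is exactly where you use it: it kills the usual secondary term $|L|/k=N^2/k$, which would otherwise dominate $N^4/k^3$ for $k>N$; since $S_k=\emptyset$ once $k$ exceeds the concurrency threshold $cN$, and $N^2/k\le c^2 N^4/k^3$ for all $k\le cN$, that term is absorbed throughout. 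Three small bookkeeping points to make explicit: first, the range $2\le k<8$ (where your $E<4V$ case needs $k\ge 8$) is covered trivially by $|S_k|\le |S_2|\le\binom{N^2}{2}\ll N^4/k^3$ with an absolute constant; second, your phrase ``whenever $k\le N$'' should read ``whenever $k\le cN$'' (at the cost of a factor $c^2$), so the interval $N<k\le cN$ is not left uncovered; third, for the crossing lemma one should record that the graph is simple (two distinct lines cannot share two vertices) and that each of the $\binom{N^2}{2}$ line pairs contributes at most one crossing of the drawing, since the edges along a fixed line are internally disjoint segments. All three hold, so the argument goes through.
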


Guth and Katz, see \cite{GK} and the references contained therein, give a detailed account of prior developments and applications of the main ideas behind their proof of the Erd\H os distance
conjecture. These ideas have been taken up to achieve some progress in problems dealing with sum and product sets. Iosevich and the authors, \cite{IRR}, showed that Theorem \ref{theorem:GKmain} enables
one to claim the following sum-product type  estimate:
\begin{equation}
|A\cdot A\;\pm\;A\cdot A|\gg \frac{|A|^{2}}{\log|A|}. \label{us}\end{equation} The estimate is based on the lower bound for the set of values of distinct cross products, generated by a point set in
$\R^2$, which follows from the Guth-Katz theorem after ``lifting'' the problem in the three-dimensional Lie group $SL_2(\R)$.

Jones, \cite{J}, considers the action of the group $PSL_2(\R)$ on the real projective line $\R P^1$, with the objective of establishing the minimum number of  cross-ratios determined by a finite set
$A\subset \R P^1$. He proves, among other things, that
\begin{equation}\begin{array}{rcl}
\left|\left\{\frac{(a-b)}{(b-c)}:\;a,b,c\in A\right\}\right|& \gg &\frac{|A|^2}{\log |A|},\\ \hfill \\ \left|\left\{\frac{(a-b)(c-d)}{(b-c)(a-d)}:\;a,b,c,d\in A\right\}\right|&\gg &{|A|^2}.\end{array}
\label{tim}\end{equation} However, the incidence problem in three dimensions that follows after lifting the original problem from $\R P^1$ to  $PSL_2(\R)$ turns out to be one between points and planes.
Jones could then take advantage of the ``pre Guth-Katz'' incidence theorems: the Szemer\'edi-Trotter theorem for the first estimate in (\ref{tim}) and a point/plane incidence theorem, based on the
works of Edelsbrunner et al, see e.g. \cite{EGS}, for the second estimate in (\ref{tim}).

The estimates (\ref{tim}) bear testimony to the fact that there is no guarantee that re-formulating the underlying arithmetic or geometric problem in terms of the symmetry group action would
necessarily lead one to near-optimal estimates without additional effort. Indeed, the first estimate in (\ref{tim}) corresponds to cross-ratios, pinned at $d=\infty$. It is clearly sharp, up to the
logarithmic term. On the other hand, the second estimate (\ref{tim}) is unlikely to be sharp: presumably the best possible one would have at least $c_\epsilon |A|^{3-\epsilon}$ in the right-hand side.
Similar observations have been made in an earlier work of Solymosi and Tardos, \cite{SoTa}, concerning the action of the complex M\"obius transformation group.

\medskip
Let us give a brief outline of the forthcoming proof of Theorem \ref{theorem:main}, which follows structurally the Guth-Katz approach to the Euclidean Erd\H{o}s distance problem. By the Cauchy-Schwarz inequality, in order to bound from below the number of distinct Minkowski distances,
it suffices to provide the upper bound on the number of pairs of congruent line intervals $[pq],\,[st]$ of equal Minkowski length. (We prefer the nomenclature {\em intervals} to {\em segments} as a
tribute to terminology in Special Theory of Relativity.)

Unless they lie on two distinct branches of the light cone, the intervals $[pq]$ and $[st]$ are congruent if and only if there is an isometry $\phi\in ISO^*(1,1)$, such that $\phi(p)=s$ and
$\phi(q)=t$. Here $ISO^*(1,1)$ denotes the subgroup, the connected component of the identity, of the three-dimensional Lie group $ISO(1,1)$ of the isometries of $\R^{1,1}$. Topologically,
$ISO^*(1,1)\cong \R^3$.

The set of the Minkowski isometries, taking some point $p=(p_1,p_2)\in P$ to some $s=(s_1,s_2)\in P$ can be parameterised as a straight line in $\R^3$. This can be easily seen, since after the
coordinate change (\ref{coordchg}) one has a faithful representation of $ISO^*(1,1)$ simply as
\begin{equation} \Phi = \left(
                 \begin{array}{ccc}
                   z & 0 & x \\
                   0 & z^{-1} & y \\
                   0 & 0 & 1 \\
                 \end{array}
               \right),\label{rep}\end{equation}
               with $z>0$, acting on three-dimensional vectors of the form $(a,b,1)$. The line $l_{ps}$ comes out after solving (which is possible for any $p,s$,  unless the two points lie on different
               coordinate axes)   $\Phi \tilde p = \tilde s$, where $\tilde p = (p_1,p_2,1)$ corresponds to the point $p=(p_1,p_2)$; similarly  $\tilde s = (s_1,s_2,1)$.
Hence, there is a one-to one correspondence between the set of pairs of congruent intervals and the set of pair-wise intersections of $N^2$ distinct lines in $\R^3$. (Strictly speaking, under this
consideration $[pq]$ and $[qp]$ are treated as distinct intervals, and there is no isometry, homotopic to the identity, that would reverse one into the other.)

It remains to verify the conditions of Theorem \ref{theorem:GKmain}. The conditions (i) and (iii) are satisfied in  the same way as they are in \cite{GK}. But the condition (ii) generally gets
violated. Nonetheless, it turns out that the condition is just ample enough to enable one to use a single instance of a ``divide and conquer'' strategy. We will show that each line from the family of
$N^2$ lines in question  may lie in one and only one ``rich plane'', and points of $S_k$ can be partitioned by whether or not at least half of the lines, incident to a point in $S_k$ lie each in some
rich plane.

In the latter case,  Theorem \ref{theorem:GKmain} applies, affecting only the constant hidden in the estimate (\ref{GKE}). In the former case, it turns out that pair-wise intersections inside rich
planes correspond to zero Minkowski distances and should be discounted. As a result, a rather crude argument of counting the remaining pair-wise intersections occurring in each rich plane individually
and then summing over rich planes suffices.

The rest of this paper is concerned with the proof of Theorem \ref{theorem:main}.
\end{section}
\begin{section}{Proof of Theorem \ref{theorem:main}}
\subsection{Rectangular quadruples}
Following \cite{ES} and \cite{GK}, the size $|R(P)|$ of the set of rectangular areas, or squares of Minkowski distances generated by $P$ will be estimated by the Cauchy-Schwarz inequality from the
upper bound on the number of pairs of intervals with equal Minkowski nonzero lengths, referred here as ``rectangular quadruples''.

For $x\in{R(P)}$, define its number of realisations
$$n(x):=|\{(p,q)\in{P\times{P}}:R(p,q)=x\}|.$$
Observe that
\begin{equation}
N^2=\sum_{x\in{R(P)}}n(x)=\sum_{x\neq 0}n(x)+|\{(p,q)\in{P\times{P}}:R(p,q)=0\}|. \label{CS0}
\end{equation}
Let us further use the notation $R^*(P)= R(P)\setminus\{0\}$.

We can assume from now on that a single vertical or horizontal line does not support more than $cN$ points of $P$, for otherwise, as long as there is a point of $P$ not supported on the latter line --
a circumstance which can be assumed thanks to the hypotheses of Theorem \ref{theorem:main} -- we have $|R(P)|\gg N$. 

It follows from (\ref{CS0}) that
\begin{equation}
N^2\ll \sum_{x\in R^*(P)}n(x). \label{CS}
\end{equation}

Indeed, if the second term in (\ref{CS0}) were to exceed $2cN^2$, there would be a $p\in P$, such that $|\{q\in P:\,R(p,q)=0\}|\geq 2cN$, hence a vertical or horizontal line supporting at least $cN$
(but not all) points of $P$.

Hence, by the Cauchy-Schwarz inequality,
\begin{equation}
N^4\ll{\left(\sum_{x\in R^*(P)}n(x)^2\right)|R(P)|}. \label{CS2}
\end{equation}
A quadruple $(p,q,s,t)\in{P}\times P\times P\times P$ is defined to be a \textit{rectangular quadruple} if
\begin{equation}
R(p,q)=R(s,t)\neq 0. \label{energy}
\end{equation}
Note that the quantity, denoted further as
\begin{equation}{\mathcal Q}=\sum_{x\in R^*(P)}n(x)^2\label{qucal}
\end{equation} is the number of rectangular
quadruples.

In other words, ${\mathcal Q}$ is the number of Minkowski-congruent pairs of intervals defined by $P$, with non-zero Minkowski length.

\medskip
Theorem \ref{theorem:main} follows from the next Proposition, by combining it with \eqref{CS2}.

\begin{proposition} \label{incidences} For a set $P$ of $N$ points in the plane, the number of
rectangular quadruples
\begin{equation}\label{prove}{\mathcal Q} \ll N^3\log{N}. \end{equation}
\end{proposition}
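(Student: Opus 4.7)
The plan is to reduce the count of rectangular quadruples to a three-dimensional line-incidence problem via the Elekes--Sharir--Guth--Katz framework, and then to derive the bound $|S_k|\ll N^3/k^2$ by combining Theorem \ref{theorem:GKmain} with a separate argument compensating for the places where its hypothesis (ii) fails.

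Using the faithful representation \eqref{rep} of $ISO^*(1,1)$ on $\mathbb{R}^3$, for each ordered pair $(p,s)\in P\times P$ (with $p$ and $s$ not lying on distinct coordinate axes, the few remaining pairs contributing only lower-order terms) solving $\Phi\tilde p=\tilde s$ produces a line $l_{ps}\subset\mathbb{R}^3$ parameterizing the Minkowski isometries sending $p$ to $s$. A rectangular quadruple $(p,q,s,t)$ with $R(p,q)=R(s,t)\neq 0$ corresponds precisely to an intersection of $l_{ps}$ with $l_{qt}$: the isometry at the intersection sends $p\to s$ and $q\to t$, preserving the nonzero Minkowski length. Thus $\mathcal{Q}$ is essentially the number of pair-wise intersection pairs in the family $\mathcal{L}$ of at most $N^2$ distinct lines. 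Letting $S_k$ denote the set of points of $\mathbb{R}^3$ incident to between $k$ and $2k$ lines of $\mathcal{L}$, a standard dyadic decomposition yields $\mathcal{Q}\ll\sum_k k^2|S_k|$ summed over $O(\log N)$ dyadic scales $2\le k\le N$, so it suffices to prove $|S_k|\ll N^3/k^2$ for each such $k$.

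Hypothesis (i) of Theorem \ref{theorem:GKmain} is immediate: at most $N$ lines of $\mathcal{L}$ can meet at any fixed isometry $\phi$, since $\phi$ uniquely determines $s=\phi(p)$ from $p$. Hypothesis (iii) should follow as in \cite{GK} from the algebraic constraint that a non-plane doubly-ruled surface imposes on the parameters $(x,y,z)$ in \eqref{rep}. The essential obstacle is hypothesis (ii): a direct computation with \eqref{rep} shows that some planes in $\mathbb{R}^3$ can carry many more than $Nk$ lines of $\mathcal{L}$, arising precisely from point-pairs $(p,s)$ that lie on common Minkowski light cones.

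To handle this, call a plane $\pi$ \emph{rich} if it contains at least $cNk$ lines of $\mathcal{L}$ for a suitable small $c$. The key technical step, which I expect to be the main work, is a direct computation with \eqref{rep} establishing two structural facts: (a) every line $l_{ps}\in\mathcal{L}$ lies in at most one rich plane; and (b) whenever two lines $l_{ps},l_{qt}\in\mathcal{L}$ meet inside a rich plane, necessarily $R(p,q)=R(s,t)=0$, so the corresponding quadruple is a null-length congruence already excluded from $\mathcal{Q}$ by the restriction to $R^*(P)$. Granted (a) and (b), partition $S_k$ into $S_k^{\mathrm{sp}}$, the points at which fewer than half of the incident lines lie in some rich plane, and $S_k^{\mathrm{ri}}$, the complement. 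For $S_k^{\mathrm{sp}}$, delete all lines contained in rich planes from $\mathcal{L}$; each point of $S_k^{\mathrm{sp}}$ still carries $\Omega(k)$ incidences with the pruned family, which by construction satisfies hypothesis (ii), so Theorem \ref{theorem:GKmain} delivers $|S_k^{\mathrm{sp}}|\ll N^3/k^2$. For $S_k^{\mathrm{ri}}$, the $\Omega(k)$ relevant incidences at each point occur inside rich planes and, by (b), correspond to pairs of zero Minkowski length that do not contribute to $\mathcal{Q}$; a crude planar count inside each $\pi$, combined with (a) to forbid double-counting lines across distinct rich planes, absorbs any residual contribution. Summing over the dyadic scales of $k$ yields $\mathcal{Q}\ll N^3\log N$, as required.
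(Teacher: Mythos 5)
Your outline follows the paper's strategy exactly (Elekes--Sharir lifting to $ISO^*(1,1)$, Guth--Katz for the ``generic'' part, a separate treatment of rich planes), but the two steps you explicitly defer as ``the main work'' are precisely where the content of the proof lies, and one of them is false as you state it. Your structural fact (a) --- each line $l_{ps}$ lies in at most one rich plane --- does not hold in general: writing $l_{ps}$ as $x=s_1-p_1z$, $y=p_2-s_2z$, this line lies in \emph{both} of the planes $x+p_1z=s_1$ and $y+s_2z=p_2$, and the analysis of which planes can carry more than $N$ lines (Lemma \ref{rich}) shows these two are exactly the candidates for richness; if the abscissae $p_1,s_1$ and the ordinates $p_2,s_2$ are all shared by $\gtrsim\sqrt{Nk}$ points of $P$, both planes are rich simultaneously. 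The paper repairs this by first pigeonholing: either a quarter of $P$ has both coordinates ``poor'' (then hypothesis (ii) holds outright), or a quarter has one specific coordinate rich and the other poor, after which only one of the two plane families can be rich and uniqueness is restored. (Your claim (b), that two coplanar lines in a rich plane force $p_1=q_1$, $s_1=t_1$ and hence $R(p,q)=R(s,t)=0$, is correct and is the same observation the paper makes.)

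The second gap is the bound for $S_k^{\mathrm{ri}}$. It is not true that the $\Omega(k)$ incidences at such points ``correspond to pairs of zero Minkowski length'': a point of $S_k^{\mathrm{ri}}$ can still carry up to $k$ lines transverse to the relevant rich plane $\pi_i$, and the pairs (transverse, transverse) and (transverse, in-plane) at that point \emph{do} contribute to $\mathcal{Q}$. The paper's count splits $X_i=S_k^{\mathrm{r}}\cap\pi_i$ by whether at least half the incident lines are transverse to $\pi_i$; in the transverse-dominated case $|X_i^\perp|\le 2N^2/k$ because the $\le N^2$ transverse lines each meet $\pi_i$ once, giving a contribution $\ll kN^2$; in the in-plane-dominated case the in-plane pairs are discarded by (b) and the remaining contribution is $\ll k\sum_\sigma k^\perp(\sigma)\le kN^2$. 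Only then does the bound (number of rich planes) $\le N/k$ --- itself a consequence of the uniqueness in (a) --- yield the total $O(N^3)$ per dyadic scale. Without these quantitative inputs, ``a crude planar count absorbs any residual contribution'' is an assertion, not an argument; naively there could be $N/k$ rich planes each contributing $k^2\cdot(\text{many points})$, and it is exactly the interplay of the three bounds above that prevents this from overwhelming $N^3$.
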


The rest of the paper proves Proposition \ref{incidences}.

\subsection{Isometries}
Following \cite{ES} and \cite{GK}, the quantity $\mathcal Q$ will be related to an incidence problem in the three-dimensional Lie group $ISO^*(1,1)$ of Minkowski isometries, that is affine
transformations of the plane which preserve rectangular area and are homotopic to the identity. The exposition in this section is elementary and self-contained.

\subsubsection*{Notation:}
$\,$\newline \indent 1) We use the notation $\R_{+}$ for the multiplicative group of  positive reals.

2) For $x,y\in{\mathbb{R}}$, the translation by $(x,y)$ will be denoted by $T_{(x,y)}$. So $T_{(x,y)}(a,b)=(x+a,y+b)$.

3) For $z\in \R_{+}$, $H_z$ denotes the rectangular area preserving dilation defined by $H_z(a,b)=(za,\frac{b}{z})$. In the old coordinates, see (\ref{coordchg}), $H_z$ would be a Lorentz
transformation, a hyperbolic rotation by the angle $\ln z$.

\medskip
An affine map $\phi:\mathbb{R}^2\rightarrow{\mathbb{R}^2}$ is said to be \textit{rectangle preserving} if it is of the form $\phi=T_{(x,y)}\circ{H_z}$, for some triple $(x,y,z)\in
\R\times\R\times\R_{+}\cong \R^3$. In other words, if $p=(p_1,p_2)$, then $\phi(p) = (x+ z p_1,y+ z^{-1}p_2)$. It is easy to check that rectangle preserving maps form a subgroup, further denoted as
$G_R$, of the group $Aff(2,\R)$ of the affine transformations of the plane. In particular, if $\phi=T_{(x,y)}\circ{H_z}$, then $\phi^{-1} = T_{(-x z^{-1}, -yz)}\circ S_{z^{-1}}.$ Since $z$ is positive,
any such $\phi$ is homotopic to the identity. The above group $G_R$ of rectangle preserving transformation is known in literature as $ISO^*(1,1)$, and the statement that (\ref{rep}) is its faithful
representation summarises the content of this passage.

Let us take a moment to  verify explicitly that such transformations $\phi$ preserve rectangular area. With $\phi=T_{(x,y)}\circ{H_z}$, we have $\phi(p_1,p_2)=(x+zp_1,y+z^{-1}p_2)$ and
 $\phi(q_1,q_2)=(x+zq_1,y+z^{-1}q_2).$ Therefore,

\begin{equation}\label{prs}
\begin{aligned}
R(\phi(p),\phi(q))&=(q_1z+x-(p_1z+x))(y+{q_2}{z^{-1}}-(y+{p_2}{z^{-1}}))
\\&=z(q_1-p_1)z^{-1}(q_2-p_2)
\\&=R(p,q).
\end{aligned}\end{equation}

The next statement is that if two line intervals have equal and non-zero non-zero Minkowski length, there is a unique isometry in $G_R$ (that is homotopic to the identity), taking one interval to the
other.
\begin{lemma} \label{lmm} Let $p,q,s,t\in{P}$ such that $R(p,q)\neq{0}$. Then $(p,q,s,t)$ is a rectangular
quadruple if and only if there exists a unique $\phi\in{G_R}$ such that $\phi(p)$ equals $s$ or $t$ and, respectively,  $\phi(q)$ equals $t$ or $s$.
\end{lemma}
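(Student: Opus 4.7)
The reverse direction is essentially already done: if $\phi\in G_R$ satisfies $\phi(p)=s,\phi(q)=t$ (or the swapped version), then the explicit calculation in \eqref{prs} gives $R(s,t)=R(\phi(p),\phi(q))=R(p,q)\neq 0$, so $(p,q,s,t)$ is a rectangular quadruple. All the work is in the forward direction and in uniqueness.

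\textbf{Existence.} Assume $R(p,q)=R(s,t)\neq 0$. Since the product $(q_1-p_1)(q_2-p_2)$ is nonzero, both factors are nonzero, and the same holds for $s,t$. I would first try the pairing $\phi(p)=s$, $\phi(q)=t$, writing $\phi=T_{(x,y)}\circ H_z$ and setting up the four scalar equations
\begin{equation*}
x+zp_1=s_1,\quad x+zq_1=t_1,\quad y+z^{-1}p_2=s_2,\quad y+z^{-1}q_2=t_2.
\end{equation*}
Subtracting yields $z=(t_1-s_1)/(q_1-p_1)$ and $z^{-1}=(t_2-s_2)/(q_2-p_2)$, and $x,y$ are then forced. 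The consistency condition $z\cdot z^{-1}=1$ is precisely the equality $R(p,q)=R(s,t)$, so the system is solvable as soon as the rectangular-quadruple hypothesis holds. The translations $x,y$ can always be chosen; the only issue is whether $z>0$, which is required for membership in $G_R=ISO^*(1,1)$.

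\textbf{The sign analysis (the main point).} The two fractions defining $z$ and $z^{-1}$ are reciprocals of each other, so they carry the same sign. If both are positive, the pairing $\phi(p)=s,\phi(q)=t$ already produces a valid $\phi\in G_R$. If both are negative, the alternate pairing $\phi(p)=t,\phi(q)=s$ replaces $z$ by its negative (the numerators $t_i-s_i$ flip sign while the denominators $q_i-p_i$ stay fixed), so now $z>0$ and this alternate pairing gives the desired $\phi\in G_R$. Note that $z$ cannot be zero because $R(s,t)\neq 0$ forces $t_1-s_1\neq 0$. Thus at least one of the two pairings produces a valid isometry in $G_R$.

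\textbf{Uniqueness.} The three parameters $(x,y,z)$ of $\phi\in G_R$ are determined by any two of the scalar equations involving them (e.g. the pair giving $z$ and one of the translation equations), so for each fixed pairing there is at most one candidate $\phi$. Moreover the two pairings cannot both yield an element of $G_R$ simultaneously: they produce values of $z$ of opposite signs, and only one of the signs is positive. This rules out ambiguity between the two pairings and completes the proof of uniqueness. The only mildly delicate step is the sign bookkeeping in the previous paragraph; everything else is direct linear algebra in the faithful representation \eqref{rep}.
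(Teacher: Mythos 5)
Your proof is correct and follows essentially the same route as the paper: both directions reduce to solving the over-determined linear system for $(x,y,z)$, with the equality of rectangular areas serving as the consistency condition, the positivity of $z$ arranged by swapping $s$ and $t$ when necessary, and uniqueness forced by the construction. Your sign bookkeeping (the two pairings give reciprocal-signed values of $z$, so exactly one lies in $G_R$) is a slightly more explicit version of the paper's ``Suppose $z>0$, otherwise swap $s$ and $t$.''
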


\begin{proof}By (\ref{prs}), it remains to prove the necessity. By definition of rectangular quadruples,
$R(p,q)=R(s,t)\neq 0.$ Write $p=(p_1,p_2)$ and do the same for $q,s$ and $t$. Note that $p_1\neq q_1$, $s_1\neq t_1$.

Consider the map $\phi=T_{(x,y)}\circ{H_z}$, where
\begin{align*}
z&=\frac{t_1-s_1}{q_1-p_1},
\\x&=s_1- z p_1,
\\y&=s_2- z^{-1} p_2.
\end{align*}
Suppose, $z>0$, otherwise swap $s$ and $t$. Then the map $\phi$ is an element of $G_R$, and has the property that $\phi(p)=s,\,\phi(q)=t$.

This map is rectangle preserving, so all that is required is to check that $\phi(p_1,p_2)=(s_1,s_2)$ and $\phi(q_1,q_2)=(t_1,t_2)$. Indeed,
$$
\phi(p_1,p_2)=(s_1- zp_1 + z p_1 , s_2-z^{-1} p_2+z^{-1} p_2)=(s_1,s_2),$$ as required. Also, since $(q_1-p_1)(q_2-p_2)=(t_1-s_1)(t_2-s_2)$,
\begin{align*}
\phi(q_1,q_2) &= (s_1 + z(q_1 - p_1) , s_2+z^{-1}(q_2- p_2))\\ &=\left(t_1, s_2 + z^{-1}\frac{(t_1-s_1)(t_2-s_2)}{q_1-p_1}\right)\\ &=(t_1,t_2).
\end{align*}
Uniqueness follows by construction, which was tantamount to solving for $x,y,z$ the over-defined system of equations
$$
\left(
                 \begin{array}{ccc}
                   z & 0 & x \\
                   0 & z^{-1} & y \\
                 \end{array}
               \right) \left(
                         \begin{array}{c}
                           p_1 \\
                           p_2 \\
                           1
                         \end{array}
                       \right) = \left(
                                 \begin{array}{c}
                                   s_1 \\
                                   s_2 \\
                                 \end{array}
                               \right),\qquad  \left(
                 \begin{array}{ccc}
                   z & 0 & x \\
                   0 & z^{-1} & y \\
                 \end{array}
               \right) \left(
                         \begin{array}{c}
                           q_1 \\
                           q_2 \\
                           1
                         \end{array}
                       \right) = \left(
                                 \begin{array}{c}
                                   t_1 \\
                                   t_2 \\
                                 \end{array}
                               \right).
$$
This solution is feasible and unique, provided that $R(p,q)=R(s,t)\neq 0$.
\end{proof}

\begin{remark} Note that the case of zero Minkowski distance shall be indeed excluded. If $p$ and
$q$ share the same abscissa, while $s$ and $t$ share the  same ordinate, then one has $0=R(p,q)=R(s,t)$, but there is no $\phi\in G_R$ such that $\phi(p)=s$ and $\phi(q)=t$. One may not swap time and
space-like variables in the Pseudoeuclidean metric.  \end{remark}

In line with the statement of Lemma \ref{lmm}, let us narrow the definition of rectangular quadruples by from now on regarding $p,q,s,t$ as a rectangular quadruple if there is $\phi\in G_R$, such that
$\phi(p)=s$ and $\phi(q)=t$. This reduces the count of rectangular quadruples defined by the condition (\ref{energy}) precisely by the factor of $2$.

\medskip Let $p,s\in P$. Define
\begin{equation}\label{transform} L_{ps}:=\{\phi\in G_R:\phi(p)=s.\}\end{equation}

We shall shortly identify a coordinate system in $\R^3$, in which $L_{ps}$ will become a straight line. For now let us summarise the situation as follows.

\begin{corollary} Let $p,q,s,t\in{P}$ such that $R(p,q)\neq{0}$. Then $(p,q,s,t)$ is a rectangular
quadruple if and only if $L_{ps}\cap{L_{qt}}\neq{\emptyset}$.
\end{corollary}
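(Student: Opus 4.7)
The plan is to deduce this corollary directly from Lemma \ref{lmm} together with the narrowed definition of rectangular quadruples introduced immediately before the corollary, essentially as an unfolding of notation.

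First, I would recall the narrowed definition: under the hypothesis $R(p,q) \neq 0$, the quadruple $(p,q,s,t)$ is rectangular precisely when there exists $\phi \in G_R$ with $\phi(p) = s$ and $\phi(q) = t$ (Lemma \ref{lmm} guarantees this $\phi$ is then unique, so no ambiguity arises). Next, I would simply chase definitions: by \eqref{transform}, the condition $\phi(p)=s$ is equivalent to $\phi \in L_{ps}$, and $\phi(q)=t$ is equivalent to $\phi \in L_{qt}$. Therefore the existence of such a $\phi$ is equivalent to $L_{ps} \cap L_{qt} \neq \emptyset$.

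For the converse direction, any $\phi \in L_{ps} \cap L_{qt}$ automatically satisfies $\phi(p)=s$ and $\phi(q)=t$, so $(p,q,s,t)$ is a rectangular quadruple by the narrowed definition, with $R(p,q) = R(\phi(p),\phi(q)) = R(s,t)$ by \eqref{prs}, which is consistent with the assumption $R(p,q) \neq 0$.

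There is no real obstacle here; the only subtlety worth flagging is the role of the hypothesis $R(p,q) \neq 0$, which is what makes Lemma \ref{lmm} applicable and prevents the degenerate situation highlighted in the preceding remark, where $L_{ps} \cap L_{qt}$ could fail to contain an element of $G_R$ despite the rectangular areas agreeing at zero. Since the proof is a one-step repackaging of Lemma \ref{lmm}, I expect the written argument to be a single short paragraph.
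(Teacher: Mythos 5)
Your proposal is correct and matches the paper's approach: the corollary is stated there without a separate proof precisely because it is the immediate unfolding of Lemma \ref{lmm} and the definition \eqref{transform} of $L_{ps}$, which is exactly what you carry out. Your remark about the role of the hypothesis $R(p,q)\neq 0$ is also consistent with the paper's own remark following Lemma \ref{lmm}.
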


Observe that for any quadruple $(p,q,s,t)$ of points from $\mathbb{R}^2$,
\begin{equation}
|L_{ps}\cap{L_{qt}}|>1\;\Leftrightarrow\;{(p,s)=(q,t)}. \label{intersections}
\end{equation}
Indeed, the condition $|L_{ps}\cap{L_{qt}}|>1$ means, in particular, that the system of linear equations $s_1= x+zp_1,\,t_1=x+zq_1$ has more than one solution $(x,z)$, which may only happen if
$q_1=p_1$, $s_1=t_1$. Similarly $|L_{ps}\cap{L_{qt}}|>1$ implies that $q_2=p_2$, $s_2=t_2$.

Let $\Psi: G_R\rightarrow \R\times\R\times \R_{+}$ be the map defined by
$$\Psi(T_{(x,y)}\circ{H_z}):=(x,yz,z).$$
Clearly, $\Psi$ is a bijection.

\begin{lemma} \label{lns} Let $p,s\in P$. Then $l_{ps}=\Psi(L_{ps})$ parameterises an open straight line interval in
$\mathbb{R}^3.$
\end{lemma}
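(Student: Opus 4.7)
The plan is to compute $l_{ps}$ directly by solving the defining equations of $L_{ps}$ and then applying the coordinate map $\Psi$. Write $p = (p_1,p_2)$, $s = (s_1,s_2)$. An element $\phi = T_{(x,y)}\circ H_z \in G_R$ belongs to $L_{ps}$ exactly when
$$
(x + zp_1,\; y + z^{-1}p_2) = (s_1, s_2),
$$
which is a system that can be solved uniquely for $(x,y)$ in terms of $z \in \R_{+}$, giving $x = s_1 - zp_1$ and $y = s_2 - z^{-1}p_2$. Thus $L_{ps}$ is parameterised faithfully by the single positive parameter $z$.

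Next I would apply $\Psi$ to this parameterisation. By definition $\Psi(T_{(x,y)}\circ H_z) = (x, yz, z)$, so substituting the expressions for $x$ and $y$ gives
$$
\Psi(\phi) = \bigl(s_1 - zp_1,\; zs_2 - p_2,\; z\bigr) = (s_1, -p_2, 0) + z\,(-p_1, s_2, 1).
$$
This is an affine function of $z$ with nonzero direction vector $(-p_1, s_2, 1)$ (the third component is $1$), so as $z$ ranges over the open interval $(0,\infty)$, the image traces out an open straight line interval in $\R^3$ — namely the open half-line obtained by restricting the line through $(s_1,-p_2,0)$ with direction $(-p_1,s_2,1)$ to the open parameter range $z > 0$.

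There is no real obstacle here: the representation (\ref{rep}) together with the specific choice of the coordinate $yz$ (rather than $y$) in the definition of $\Psi$ was set up precisely so that the three natural coordinates on $L_{ps}$ depend affinely on the single free parameter $z$. The only point worth noting is that $z = 0$ and $z = \infty$ are both excluded, since $z \in \R_+$ for genuine elements of $G_R$; this accounts for the word "open" in the statement.
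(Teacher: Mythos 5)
Your proposal is correct and follows essentially the same route as the paper: solve the defining equations of $L_{ps}$ for $x$ and $y$ in terms of the free parameter $z>0$, and observe that in the coordinates $(x,yz,z)$ the image is affine in $z$, hence an open ray. The only cosmetic difference is that you write out the point-plus-direction form $(s_1,-p_2,0)+z(-p_1,s_2,1)$ explicitly, whereas the paper leaves the parameterisation in equation form (and later flips a sign in the second coordinate purely for notational symmetry).
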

\begin{proof}
Write $p=(p_1,p_2)$ and $s=(s_1,s_2)$. Then
$$
L_{ps} = \{(\phi = T_{x,y}\circ H_z):\, s_1=x+zp_1,s_2=y+z^{-1}p_2\}.
$$
Hence, for $z\in \R_{+}$,
$$x=s_1-p_1z, \qquad yz = -p_2 + s_2z, \qquad z=z,$$
which parameterises a straight line in the coordinates $(x, yz,z)$ in the half-space $z>0$.
\end{proof}

Denote
\begin{equation}L=\{l_{ps}:\,p,s\in{P}\}.\label{ourlines}\end{equation}
So $L$ is a set of $N^2$ lines in the half-space $z>0$ in $\R^3$ (we further just say, in $\R^3$), whose elements $l_{ps}=\Psi(L_{ps})$, after an obvious change of signs to symmetrize things, are
\begin{equation}\label{lps}
l_{ps}=\{(x,y,z): x= s_1-p_1z,\,y=p_2-s_2z,\,z>0\}.\end{equation}

\begin{remark} {\rm Note that the set $L_{ps}$ is the right coset in $G_R$ of the translation that
takes $p$ to $s$ by the subgroup stabilising $s$, which is the $SO^*(1,1)$-type subgroup of hyperbolic rotations, homotopic to the identity, with $s$ as the origin. Topologically $SO^*(1,1)\cong \R$, the
equation (\ref{lps}) providing an explicit parameterisation for the corresponding line $l_{ps}$ in the open half-space $z>0$.}\end{remark}

We summarise the argument so far as follows.
\begin{proposition} \label{keyc} Let $p,q,s,t\in{P}$, such that $R(p,q)\neq 0$. Then $(p,q,s,t)$ is a
rectangular quadruple only if $l_{ps}\cap l_{qt}\neq{\emptyset}$. Thus if $S$ is a set of points in $\R\times \R\times \R_{+}$ where pairs of distinct lines intersect, and for $\sigma \in S$,
$n(\sigma)$ denotes the number of pairs of distinct lines from $L$ intersecting at $\sigma$, then
\begin{equation}\label{rough}
\mathcal Q \leq \sum_{\sigma\in S} n(\sigma).\end{equation}
\end{proposition}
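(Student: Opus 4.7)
The statement consolidates Lemmas \ref{lmm} and \ref{lns} into an incidence-theoretic reformulation of the quadruple count, so the plan is largely careful bookkeeping rather than new invention; nearly everything is already on the table.

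First, I would verify the one-way implication. Given a rectangular quadruple $(p,q,s,t)$ in the narrowed sense adopted after Lemma \ref{lmm}, the hypothesis $R(p,q)=R(s,t)\neq 0$ together with Lemma \ref{lmm} supplies a (unique) $\phi\in G_R$ with $\phi(p)=s$ and $\phi(q)=t$. Hence $\phi\in L_{ps}\cap L_{qt}$, and transporting through the bijection $\Psi$ of Lemma \ref{lns} gives $\Psi(\phi)\in l_{ps}\cap l_{qt}$, so the intersection is non-empty.

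Next, I would check that $l_{ps}$ and $l_{qt}$ are actually distinct (so $\Psi(\phi)$ contributes to $S$, rather than being a point on a coincident pair of lines). If $l_{ps}=l_{qt}$ then $L_{ps}=L_{qt}$, since $\Psi$ is a bijection; in particular $|L_{ps}\cap L_{qt}|>1$, and \eqref{intersections} forces $(p,s)=(q,t)$. But $p=q$ gives $R(p,q)=0$, contradicting the hypothesis. So $l_{ps}\neq l_{qt}$ and the two distinct lines meet at $\sigma=\Psi(\phi)\in S$.

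For the counting inequality \eqref{rough}, I would construct an injection from the set of rectangular quadruples into the set of triples (intersection point, ordered pair of distinct lines through it) by sending $(p,q,s,t)\mapsto(\sigma,l_{ps},l_{qt})$. Injectivity follows because from the parameterisation \eqref{lps} the line $l_{ps}$ determines the ordered pair $(p,s)$ (the slope $-p_1$ of $x$ in $z$ and the intercept $s_1$ recover $p_1,s_1$, and similarly for $p_2,s_2$), and likewise $l_{qt}$ determines $(q,t)$. Summing the fibre cardinalities over $\sigma\in S$, the total count of rectangular quadruples is bounded by the total number of pairs of distinct lines of $L$ meeting at points of $S$, which is $\sum_{\sigma\in S}n(\sigma)$.

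The "hard part" is essentially absent here: all the structural content has been front-loaded into Lemmas \ref{lmm} and \ref{lns} and into \eqref{intersections}. The only subtlety worth flagging is the distinctness of the two lines, which really does use the exclusion of zero Minkowski distance via \eqref{intersections}; this explains why the proposition is stated only under the hypothesis $R(p,q)\neq 0$, and it foreshadows the more delicate treatment of null intervals (the rich-plane obstruction) that will drive the proof of Proposition \ref{incidences}.
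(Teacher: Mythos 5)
Your proposal is correct and follows essentially the same route as the paper, which presents Proposition \ref{keyc} as a summary assembled from Lemma \ref{lmm} (via the Corollary), the bijectivity of $\Psi$ in Lemma \ref{lns}, and the distinctness criterion \eqref{intersections}. Your explicit check that $l_{ps}\neq l_{qt}$ (using $R(p,q)\neq 0$) and the injectivity of $(p,s)\mapsto l_{ps}$ from the parameterisation \eqref{lps} are exactly the points the paper relies on implicitly.
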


Proposition \ref{keyc} means that the problem of counting the number of non-zero solutions to \eqref{energy} to prove Proposition \ref{incidences} has been reduced to a question about the number of
pairwise crossings of a set of $N^2$ lines in $\mathbb{R}^3$. The count (\ref{rough}) may be refined a step further, as not all pairwise intersections should be counted: an intersection $l_{ps}\cap
l_{qt}$ yields a rectangular quadruple only if the points $p,q$ do not share a common coordinate. However, it cannot be established by looking at the line $l_{ps}$ alone which incidences of it with
other lines should be counted and which should not.

We now have all the ingredients to prove Proposition \ref{incidences}.

\subsection{Proof of Proposition \ref{incidences}}
Let us check the assumptions (i)--(iii) of Theorem \ref{theorem:GKmain} applied to the set $L$ of lines. The second assumption is the one concerning almost all of the ensuing discussion, so it is dealt
with last of the three.

\medskip
(i) No more than $N$ lines can be concurrent. This is indeed true. If more than $N$ lines $l_{ps}$ intersect at a point, there is some $p\in P$ and $\phi\in G_R$, such that $\phi(p)=s$, and
$\phi(p)=s'$ for some distinct points $s$ and $s'$. This is clearly a contradiction.

\medskip
(iii) No more than $O(N)$ lines can lie in a single non-plane doubly ruled surface $\Sigma$ (of which there are only two types: the one-sheeted hyperboloid or the hyperbolic paraboloid). The analysis
of Proposition 2.8 and Lemma 2.9 from \cite{GK} can be copied with straightforward modifications to show that this condition holds.

In brief: let us eliminate the variables $s_1,s_2$ from the explicit equations for the lines $l_{ps}$ in (\ref{lps}) and consider a family of lines $L_p$, foliating $G_R$. Each line $l_{ps}\in L_p$
corresponds to the one-dimensional family of isometries taking the point $p$ to some $s$, where $s$ is being viewed as a continuous variable in $\R^2$. Direction vectors to these lines form a vector
field, whose coordinates at the point $(x,y,z)$ are $(p_1,(y-p_2)z^{-1},1)$. Without loss of generality set $z=1$. As a line in $L_p$ pierces the $z=1$ plane at a point $(x,y,1)$, the direction vector
to the line is $d_p(x,y) =(p_1,y-p_2,1)$. Let us make the following claim. \begin{quotation} {\em Claim: if more than a finite number $K=2$ of lines from a single ruling of $\Sigma$  lie in $L_p$, then all the lines in the ruling must lie in
$L_p$.}\end{quotation} Assume the claim for a moment.  It follows that $\Sigma$ cannot contain more than $2KN$ lines of the family $L$. Indeed, otherwise one ruling of $\Sigma$ would have more than $KN$ lines of the family $L$, hence more than $K$ lines $l_{ps}$, for some $p$. Therefore, the ruling itself would be contained in the family $L_p$, and thus contain no lines $l_{qt}$ for $q\neq p$. Which means, the ruling contains at most $N$ lines of the family $L$, which is a contradiction. Hence, the condition (iii) of
Theorem \ref{theorem:GKmain} will be verified if the claim is shown to be true.

Let us verify the claim. To have a line from a ruling of $\Sigma$ belong to the family $L_p$, the lines from the ruling must intersect the plane $z=1$ transversely. Hence $\Sigma$ intersects the $z=1$ plane along an
irreducible quadratic curve $f(x,y)=0.$ (Otherwise the intersection occur along a pair of lines from two distinct rulings.) Components of the direction vectors of the lines in a single ruling of
$\Sigma$, evaluated at $z=1$, are linear functions of $(x,y)$. (This follows from the explicit equations for the one-sheeted hyperboloid or the hyperbolic paraboloid, or more generally, the fact that
both are degree $2$ irreducible surfaces.) In order to be able to compare  the latter direction vectors with $d_p$, one should normalise their third component of by $1$. This results in
linear-fractional quantities $d_1(x,y)$ and $d_2(x,y)$. So, for the lines of the ruling in question, the direction vectors at $z=1$ are $(d_1(x,y),d_2(x,y),1)$.

Now, assuming that more than $K$ lines in the ruling belong to $L_p$  is tantamount to the claim that the system of equations
\begin{equation}
\left\{
\begin{array}{lcl}
    f(x,y)&=&0, \\
    d_1(x,y)&=&p_1, \\
    d_2(x,y)&=&y-p_2.
  \end{array}\right.
\label{regeq}\end{equation} has more than $K$ distinct solutions $(x,y)$. Note that the second equation is linear in $(x,y)$. Hence the first pair of equations cannot have more than two distinct
solutions $(x,y)$, unless $d_1(x,y)=p_1=const.$ The fact that it is a constant, once again from the explicit equations for the explicit equations for the one-sheeted hyperboloid implies that the
function $d_2(x,y)$ is a linear function\footnote{Without using this fact, namely assuming that the function $d_2(x,y)$ is linear-fractional rather than linear, one can nonetheless conclude that the
first and third equations together in (\ref{regeq}) can have no more than four distinct solutions $(x,y)$, unless $f$, which is irreducible,  is a factor of the quadratic polynomial arising from the
third equation. This renders the same conclusion with $K=4$.} in $(x,y)$. Once again, there cannot be more than two distinct solutions $(x,y)$ of the system of equation (\ref{regeq}), unless
$d_2(x,y)=y-p_2.$ In the latter case, all the lines in the ruling of $\Sigma$ are in $L_p$.

\medskip
(ii) We now come to analyse how many of the lines of $L$ can lie in a single plane. Let $\pi$ be some plane in $\mathbb{R}^3$, with the equation $\alpha{x}+\beta{y}+\gamma{z}=\delta$. Consider the
intersection of $\pi$ with some line $l_{ps}$ in $L$. This intersection is the set of all $(x,y,z),$ with $z>0$, that satisfy the following system of equations:
\begin{equation}
\left\{ \begin{array}{llllllll}
                                          x && &+&p_1z &=&s_1,\\
                                           & & y &+&s_2z  &=&p_2,\\
                                          \alpha{x}&+&\beta{y}&+&\gamma{z} &=&\delta.
                                        \end{array}\right.\label{lineq1}\end{equation}

The line $l_{pq}$ lies in the plane $\pi$ only if \eqref{lineq1} has infinitely many solutions. This happens only if the system of equations is degenerate, i.e., the rows of its coefficients are
linearly dependent. This may not occur if $\alpha=\beta=0$.

If the plane $\pi$ is such that both $\alpha,\beta\neq 0,$ then  the linear dependence of the rows of the coefficients of (\ref{lineq1}) would mean that the quartet $(p_1,p_2,s_1,s_2)$ is such that
$$
\alpha p_1+\beta s_2 = \gamma, \qquad \alpha s_1+\beta p_2 =\delta.
$$
Thus given $p=(p_1,p_2)$, there is only one possible $s=(s_1,s_2)$ to satisfy the above equations, and hence the plane $\pi$ may not contain more than $N$ lines from $L$.

Suppose now that the plane $\pi$ is such that $\beta=0$. Then we can fix $\alpha=1$, and the linear dependence of the rows of the coefficients in (\ref{lineq1}) means that $p_1=\gamma$, $s_1=\delta$.

The same argument applies to the case $\alpha=0$. Hence we have shown the following.

 \begin{lemma} \label{rich} The assumption (ii) of Theorem \ref{theorem:GKmain} may fail for some plane
 $\pi$ (and some $k$) only if
$\pi$ has equation

 (i) either $x+p_1z=s_1$  where  the quantities $(p_1, s_1)$ are such that there are $\gg N$ pairs
 of points $(p,s)$, with abscissae $p_1,s_1$, respectively,

 (ii) or $y+s_2z=p_2$, where the quantities $(p_2, s_2)$ are such that there are $\gg N$ pairs of
 points $(p,s)$, with ordinates $p_2,s_2$, respectively.
\end{lemma}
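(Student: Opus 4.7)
The plan is to analyse the linear system (\ref{lineq1}), whose simultaneous solutions form the intersection of a line $l_{ps}\in L$ with a given plane $\pi\colon\ \alpha x+\beta y+\gamma z=\delta$. The line lies in $\pi$ exactly when the system has infinitely many solutions, equivalently when the three rows of its coefficient matrix are linearly dependent. I would first discard the degenerate case $\alpha=\beta=0$: then the plane equation reduces to $\gamma z=\delta$, which pins $z$ to a single value and intersects each $l_{ps}$ in at most one point, so no line of $L$ lies in such a $\pi$.

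Next I would treat the generic case $\alpha\beta\ne 0$. Expanding the linear dependence of the three rows of (\ref{lineq1}), the third row must equal $\alpha(1,0,p_1,s_1)+\beta(0,1,s_2,p_2)$, which forces $\gamma=\alpha p_1+\beta s_2$ and $\delta=\alpha s_1+\beta p_2$. These two scalar equations determine $s_2$ uniquely from $p_1$ and $s_1$ uniquely from $p_2$, so for every $p\in P$ there is at most one admissible $s\in P$. Hence $\pi$ contains at most $N$ lines of $L$, and since $k\ge 2$, condition (ii) of Theorem \ref{theorem:GKmain} is automatically satisfied by such a $\pi$.

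Finally I would handle the remaining cases. Suppose $\beta=0$ (the case $\alpha=0$ is symmetric); by rescaling take $\alpha=1$, so the plane equation is $x+\gamma z=\delta$. Since this equation has no $y$-coefficient and only the first row of (\ref{lineq1}) is $y$-free, linear dependence of the three rows forces the plane equation to be a scalar multiple of that first row, yielding $p_1=\gamma$ and $s_1=\delta$. The lines of $L$ contained in $\pi$ are therefore in bijection with pairs $(p,s)\in P\times P$ whose abscissae equal $\gamma$ and $\delta$ respectively, so condition (ii) can fail only when there are $\gg Nk\ge N$ such pairs --- precisely case (i) of the lemma. The case $\alpha=0$ produces case (ii) by the mirror analysis on ordinates.

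The argument is essentially routine linear algebra; the only step I would verify carefully is that the absence of $y$ (resp.\ $x$) in the plane equation rigidly pins down the form of $\pi$. This holds because within the $3\times 4$ coefficient matrix of (\ref{lineq1}) the $y$ (resp.\ $x$) variable appears in exactly one row with coefficient $1$, leaving no room for a non-trivial combination to produce a row with zero $y$-entry (resp.\ $x$-entry) other than the one already present.
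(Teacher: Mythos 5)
Your proposal is correct and follows essentially the same route as the paper: both analyse the linear dependence of the rows of the system (\ref{lineq1}), dispose of the case $\alpha=\beta=0$, show that $\alpha\beta\neq 0$ forces at most one $s$ per $p$ (hence at most $N$ lines in $\pi$), and reduce the remaining cases $\beta=0$ and $\alpha=0$ to the plane equations $x+p_1z=s_1$ and $y+s_2z=p_2$ respectively. The extra care you take in justifying why the $y$-free (resp.\ $x$-free) plane equation must be proportional to the corresponding row is a correct elaboration of a step the paper states without comment.
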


\begin{remark} {\rm Note that in the special case $P=A\times{A}$ the above scenario in Lemma \ref{rich} cannot occur, and so all the conditions of Theorem \ref{theorem:GKmain} are satisfied.
Therefore,
the ensuing analysis is not needed in order to prove the sum-product type estimates \eqref{mrs} in the special case $A=B$, which follows right at this point, by Theorem \ref{theorem:GKmain}. Indeed, if
the assumption (ii) of Theorem \ref{theorem:GKmain} is satisfied, one has, after a dyadic summation in $k$:
\begin{equation}\mathcal Q\ll \sum_{k=2^j\leq N} k^2|S_k|\ll {N^3}\,{\log N}.\label{dysum}\end{equation}
However, the scenario in Lemma \ref{rich} already takes place if $P=A\times{B}$, with, say $|B|=o(|A|)$.} \end{remark}

In view of Lemma \ref{rich}, let us label an abscissa or ordinate (in the plane, where the set $P$ lives and the group $G_R$ acts) as {\em rich} if there are $>2\sqrt{N}$ points of $P$ having this
abscissa or ordinate; otherwise it is labeled as {\em poor}.

Then, since strictly less than each fourth point of $P$ may have both coordinates labeled as rich, at least one of the following two cases always occurs.

{\sf Case 1.} There is $P'\subseteq P$, containing at least $25\%$ of points of $P$, with the property that one specific coordinate is rich and the other coordinate is poor.

{\sf Case 2.} There is $P'\subseteq P$, containing at least $25\%$ of points of $P$ are such that both coordinates are poor.

If Case 2 occurs, then we replace $P$ by $P'$, restricting all the above constructs, such as $\mathcal Q$, $L$ to it. It follows from Lemma \ref{rich} that the assumption (ii) of Theorem
\ref{theorem:GKmain} is now satisfied. Theorem \ref{theorem:main} follows.

\medskip
Hence, throughout the rest of the proof it will be assumed that $P$ is such that {\em all} its abscissae are rich and {\em all} ordinates are poor. That is, assumption  (ii) of Theorem
\ref{theorem:GKmain} may fail, for some value of the parameter $k$, in a plane $\pi$ if and only if the equation of $\pi$ is $x+p_1z=s_1$, where $p_1,s_1$ are some two abscissae from the projection of
$P$ on the $x$-axis. Let us refer to these planes as rich planes. Thus, each line $l_{ps}$ defined in (\ref{lps}) lies in one and only one rich plane.

We now make an important observation. Whenever we have, for some rich plane $\pi$, that $\pi$ contains two distinct lines $l_{ps}$ and $l_{qt}$, this means that $p_1=q_1$ and $s_1=t_1$, so
$R(p,q)=R(s,t)=0$. I.e., the intersection of $l_{ps}$ and $l_{qt}$ does not contribute to the number of rectangular quadruples $\mathcal Q$, since it was defined relative to nonzero Minkowski
distances only. Thus we can refine (\ref{rough}) as follows:

\begin{equation}\label{ref}
\mathcal Q\ll \sum_{\sigma\in S} n^*(\sigma),
\end{equation}
where $n^*(\sigma)$ is the number of pairs of distinct lines incident to $\sigma$, but not lying in the same rich plane.

To complete the proof of Proposition \ref{incidences}, let $k$ be a dyadic integer, such that $2\leq k=2^j\leq \lceil \log_2 N \rceil$, and let $S_k$ be as in Theorem \ref{theorem:GKmain}, that is the
subset of $S$, containing all points $\sigma$, incident to a number of distinct lines from $L$ in the interval $[k,2k].$ It will be further shown that for every such $k$, one has

\begin{equation}\label{refk}
\sum_{\sigma\in S_k} n^*(\sigma) \ll N^3.
\end{equation}
Summing over the dyadic values $2\leq k=2^j\leq \lceil \log_2 N \rceil$ would then establish the desired estimate (\ref{prove}).

In order to proceed, we have to refine the notion of what rich is, relative to the value of $k$ in Theorem \ref{theorem:GKmain}. Given a dyadic $k:\,2\leq k\leq N$, we call a pair of abscissae in $\R^2$, and hence
the corresponding plane $\pi$ in $\R^3$, $k-$rich if $\pi$ supports more than $Nk$ lines from $L$. We also partition  $L$ into $k-$rich and $k-$poor lines by whether or not a particular line lies in
some $k-$rich plane $\pi$. Note that there may be only one such $\pi$ for each line. We now write $S_k = S_k^p \cup S_k^r$, where
$$\begin{array}{c} S_k^p=\{\sigma \in S_k:\mbox{  at least } \frac{k}{2} \mbox{ lines, incident to }\sigma \mbox{ are $k-$poor}\},\\ \hfill \\
S_k^r=\{\sigma \in S_k:\mbox{  at least } \frac{k}{2} \mbox{ lines, incident to }\sigma \mbox{ are $k-$rich}\}.
\end{array}$$

Theorem \ref{theorem:GKmain} now applies to the set $S_k^p$, since the set of $k-$poor lines satisfies the assumption (ii) of the theorem. Hence, the set $S_k^p$ satisfies the size estimate
(\ref{GKE}), and its contribution to the quantity $\mathcal Q$ is
$$
\sum_{\sigma\in S_k^p} n^*(\sigma) \ll |S_k^p|k^2 \ll N^3,
$$
conforming with (\ref{refk}).

\medskip
It remains to show that the set $S_k^r$ also contributes to the quantity $\mathcal Q$ at most $O(N^3)$, regardless of the (dyadic) value of $k$. Since each line may lie in at most one $k-$rich plane
$\pi$, the number of such planes $m$ is, by definition of  $k-$richness, at most $\frac{N}{k}$. Let $\pi_1,\ldots,\pi_m$ be the $k-$rich planes. For $i=1\ldots, m$, let $X_i=S_k^r\cap \pi_i$. The
subsets $X_i$ are not necessarily disjoint, but the crude estimate of summing the contribution of each of $X_i$ into the quantity $\mathcal Q$ suffices. Let us partition $X_i$ into two sets $X_i^\perp$
and $X_i^\|$ by whether or not at least half of the lines incident to the point $\sigma \in X_i$ are transverse to $\pi_i$ or not. Since there are at most $N^2$ transverse lines and at least
$\frac{k}{2}$ of them are incident to each point of the set $X_i^\perp$,
$$|X_i^\perp|\leq 2\frac{N^2}{k}.$$ Thus
\begin{equation}
\sum_{\sigma\in X_i^\perp} n^*(\sigma)\leq 4k^2|X_i^\perp|\leq 8kN^2. \label{perpcon}\end{equation}

\medskip
It remains to estimate the contribution of the sets $X_i^\|$ to the quantity $\mathcal Q$. Given $\sigma\in X_i^\|$, let $k^\perp(\sigma)\ll k$ be the number of lines of $L$  transverse to $\pi_i$ and
incident to $\sigma.$ Let $k^\|(\sigma)\geq \frac{k}{2}$ be the number of lines of $L$ incident to $\sigma$ and lying in the plane $\pi_i$.

Then, as pair-wise intersections inside $\pi_i$ do not contribute to the quantity $\mathcal Q$,
\begin{equation}\label{parcon}
\sum_{\sigma\in X_i^\|} n^*(\sigma)\ll \sum_{\sigma\in X_i^\|} (k^\perp(\sigma)^2 + k^\|(\sigma)k^\perp(\sigma))\ll k \sum_{\sigma\in X_i^\|} k^\perp(\sigma) \leq kN^2.
\end{equation}

Recalling that the number of $k-$rich planes is at most $\frac{N}{k}$ leads one to conclude from the estimates (\ref{perpcon}) and (\ref{parcon}) that
$$
\sum_{\sigma\in S_k^r} n^*(\sigma) \ll \frac{N}{k} \cdot(kN^2) = N^3.
$$
This establishes the estimate (\ref{refk}), and completes the proof of Proposition \ref{incidences} and Theorem \ref{theorem:main}.

\qed
\end{section}


\begin{thebibliography}{4}

\bibitem{BP} G.R. Burton, G.B. Purdy. {\em The directions determined by $n$ points in the plane.} J.
    London Math. Soc. (2) {\bf 20} (1979), no. 1, 109–-114.

\bibitem{EGS}   H. Edelsbrunner, L. Guibas, M. Sharir. {\em The complexity of many cells in
    arrangements of planes and related problems.} Discrete Comput. Geom., {\bf 5}(2) (1990),
    197–-216.

\bibitem{ES} G. Elekes, M. Sharir. {\it Incidences in three dimensions and distinct distances in the
    plane}. Proceedings 26th ACM Symposium on Computational Geometry (2010), 413--422.

\bibitem{E} P. Erd\H os. {\em On sets of distances of $n$ points.} American Mathematical Monthly {\bf
    53} (1946), 248-–250.

\bibitem{ErS} P. Erd\H os, E. Szemer\'edi. {\em On sums and products of integers.} Studies in Pure
    Math. (Birkh\"auser, Basel, 1983), 213--218.

  
    \bibitem{F} K. Ford. {\em The distribution of integers with a divisor in a given interval.} Ann. of Math. (2) {\bf 168} (2008), no. 2, 367–-433.

\bibitem{GK} L. Guth, N. H. Katz. {\em On the Erd\"os distinct distance problem in the plane.}
    Preprint {\sf arXiv:math/1011.4105} (2010), 37pp.

 \bibitem{HIS} D. Hart, A. Iosevich, J. Solymosi. {\em Sum-product estimates in finite fields via
     Kloosterman sums.} Int. Math. Res. Not. no. 5 (2007), art. ID rnm007, 14 pp.

 \bibitem{IRR}  A. Iosevich, O. Roche-Newton, M. Rudnev. {\em On an application of Guth-Katz theorem.
     } Math. Res. Lett. {\bf 18} (2011), no. 4, 691–-697.

 \bibitem{IR} A. Iosevich, M. Rudnev. {\em Erd\H os distance problem in vector spaces over finite
     fields.} Trans. Amer. Math. Soc. {\bf 359} (2007), no. 12, 6127–-6142.

 \bibitem{J} T.G.F. Jones. {\em New results for the growth of sets of real numbers.} Preprint {\sf
     arXiv:math/1202.4972} (2012), 10pp.


\bibitem{JRN} T.G.F. Jones, O. Roche-Newton. {\em Improved bounds on the set $A(A + 1)$}. J. Combin. Th. A (2013), to appear. Availabe as preprint {\sf
     arXiv:math/1205.3937} (2012), 17pp.
     
          \bibitem{LRN} L. Li, O. Roche-Newton. {\em Convexity and a sum-product type estimate.} Acta Arith. {\bf 156} (2012), 247--255.

     \bibitem{SS} T. Schoen, I. Shkredov. {\em On sumsets of convex sets.} Comb. Probab. Comput. {\bf 20}
(2011), 793--798.

  \bibitem{SoTa} J. Solymosi, G. Tardos. {\em On the number of k-rich transformations.} Computational geometry (SCG'07), 227–-231, ACM, New York, 2007.

\bibitem{StT} A.H. Stone, J.W. Tukey. {\em Generalized "sandwich'' theorems.} Duke Math. J. {\bf 9}
    (1942), 356–-359.


\bibitem{ST}   E. Szemer\'edi, W. T. Trotter. {\em Extremal problems in discrete geometry.}
    Combinatorica {\bf 3} (1983), 381--392.

\bibitem{U}   P. Ungar. {\em $2N$ noncollinear points determine at least $2N$ directions.} J. Combin.
    Theory Ser. A {\bf 33} (1982), no. 3, 343-–347.

\end{thebibliography}
\end{document}